\newtheorem{thm}{Theorem}
\newtheorem{prop}[thm]{Proposition}
\newtheorem{lem}[thm]{Lemma}
\newtheorem{cor}[thm]{Corollary}
\theoremstyle{definition}
\theoremstyle{remark}
\newtheorem{rem}[thm]{Remark}
\newcommand{\Aut}{{\rm Aut}}
\newcommand{\ZZ}{\mathbb{Z}}
\title{Graphs whose Kronecker covers are bipartite Kneser graphs}
\author{Takahiro Matsushita}
\address{Department of Mathematical Sciences, University of the Ryukyus, Nishihara-cho, Okinawa 903-0213, Japan}
\email{mtst@sci.u-ryukyu.ac.jp}
\subjclass[2010]{Primary 05C15; Secondary 55U10}
\keywords{Kneser graphs, bipartite Kneser graphs, Kronecker covers, automorphism groups of graphs, chromatic numbers, neighborhood complexes.}
\begin{document}

\baselineskip.525cm

\maketitle

\begin{abstract}
We show that there are $k$ simple graphs whose Kronecker covers are isomorphic to the bipartite Kneser graph $H(n,k)$, and determine the automorphism groups of these graphs. Using the neighborhood complexes of graphs, we also show that the chromatic numbers of these graphs coincide with $\chi(K(n,k)) = n - 2k + 2$.
\end{abstract}


\section{Introduction}

A {\it covering map} of graphs is a surjective graph homomorphism $p \colon \tilde{G} \to G$ such that $p$ maps the neighborhood of each vertex $v$ in $\tilde{G}$ bijectively onto the neighborhood of $p(v)$. The Kronecker cover of $G$ is the categorical product $K_2 \times G$ by $K_2$ (see Section 2). When $G$ is connected and non-bipartite, then its Kronecker cover is the unique double cover which is bipartite, and when $G$ is bipartite, its Kronecker cover is the disjoint union of two copies of $G$. Kronecker covers are fundamental objects in covering theory of graphs, and have appeared in different branches of combinatorics (see \cite{BGZ}, \cite{Lovasz}, \cite{Maldeghem}, \cite{Matsushita}, and \cite{Waller}).

It is known that different graphs may have isomorphic Kronecker covers. For example, Imrich and Pisanski \cite{IP} constructed a graph $G$ such that $G$ is not isomorphic to the Petersen graph but its Kronecker cover is isomorphic to the Kronecker cover of the Petersen graph. Then it is natural to classify all possible graphs whose Kronecker covers are isomorphic to a given bipartite graph. Such a problem was actually written in \cite{IP}, and was settled in the cases of hypercubes \cite{BIKZ} and generalized Petersen graphs \cite{KP}.

In general, graphs having the same double covers have many properties in common, and it is difficult to distinguish them. For example, they have the same degree sequence. Moreover, if $K_2 \times G \cong K_2 \times H$, then the neighborhood geometries of $G$ and $H$ coincide (see \cite{BGZ} and \cite{Maldeghem}). In other words, one cannot distinguish these graphs by the information of the family of sets which appear as a neighborhood of a vertex. 

The purpose in this paper is to classify the graphs whose Kronecker covers are isomorphic to the bipartite Kneser graph $H(n,k)$, and we further determine their automorphism groups and chromatic numbers. The bipartite Kneser graph is the Kronecker cover of the Kneser graph $K(n,k)$. Therefore we classify the graphs whose neighborhood geometries coincide with that of $K(n,k)$.

Now we recall the definitions of the Kneser graphs $K(n,k)$ and bipartite Kneser graphs $H(n,k)$. Let $n$ and $k$ be positive integers with $n > 2k$. The {\it Kneser graph $K(n,k)$} is the graph consisting of $k$-subsets of $[n] = \{ 1, \cdots, n\}$, where two $k$-subsets are adjacent if and only if they have no intersection. The graph $K(n,1)$ is the complete graph with $n$ vertices, and $K(5,2)$ is the Petersen graph. The Kneser graphs have appeared as fundamental objects in many branches of combinatorics (see \cite{GM}, \cite{Kozlov}, and \cite{SU} for example).

The {\it bipartite Kneser graph $H(n,k)$} is the Kronecker cover of $K(n,k)$. In many references (see \cite{Mirafzal} for example), $H(n,k)$ is formulated as follows: The vertex set of $H(n,k)$ is the family of subsets of $\{ 1, \cdots, n\}$ whose cardinality is $k$ or $n - k$. Two distinct vertices $\sigma$ and $\tau$ are adjacent if and only if one of them is included in the other. It is well-known that these two definitions are equivalent (see Lemma 20 of \cite{GP}). In this paper, to simplify the description, we consider $H(n,k)$ as the Kronecker cover of $K(n,k)$.

The bipartite Kneser graph $H(2k+1, k)$ is isomorphic to the graph $Q(2k+1, k)$ called the middle cube graph, and it was a long standing conjecture that the middle cube graph has a Hamiltonian cycle. This conjecture was settled by M\"utze \cite{Mutze}, in general M\"utze and Su \cite{MS} proved that the bipartite Kneser graphs are Hamiltonian. For more information about this subject, we refer to \cite{Mutze} and \cite{MS} and their references.

Kronecker covers have nice properties from the viewpoint of category theory. For general double covers, not every graph homomorphism $f \colon G \to G'$ has a lift. However, in the case of Kronecker covers, every graph homomorphism $f \colon G \to G'$ has a lift, and when we require a certain assumption, such a lift is uniquely determined (see Lemmas \ref{lem 5} and \ref{lem 6}). By these properties of Kronecker covers, we can determine the automorphism groups of graphs and the classification of the graphs having the same Kronecker covers from the automorphism groups of their Kronecker covers. Recently, the automorphism group of $H(n,k)$ was determined by Mirafzal \cite{Mirafzal}, and our computation is based on his result.

Now we are ready to state our results:

\begin{thm} \label{thm 1}
Suppose that $n > 2k$ and $k \ge 1$. Then there are $k$ simple graphs $$G_0(n,k), G_1(n,k), \cdots, G_{k-1}(n,k),$$
where $G_0(n,k) = K(n,k)$, $K_2 \times G_i(n,k) \cong H(n,k)$, but any two of them are not isomorphic.
\end{thm}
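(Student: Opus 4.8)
The plan is to construct the $k$ graphs explicitly as quotients of $H(n,k)$ by involutions, and then distinguish them using an invariant preserved by isomorphism. Let me think about how to do this concretely.

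Let me recall the setup. $H(n,k) = K_2 \times K(n,k)$ has vertex set $\{0,1\} \times \binom{[n]}{k}$, with $(i, A)$ adjacent to $(j, B)$ iff $i \neq j$ and $A \cap B = \emptyset$. A graph $G$ with $K_2 \times G \cong H(n,k)$ corresponds (when $G$ is connected non-bipartite) to a free involution on $H(n,k)$ that swaps the two parts of the bipartition — i.e., an automorphism $\tau$ of $H(n,k)$ with $\tau$ exchanging $\{0\}\times\binom{[n]}{k}$ and $\{1\}\times\binom{[n]}{k}$, and $G = H(n,k)/\tau$. So the construction should produce $k$ such involutions.

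The natural candidates come from the automorphism group of $H(n,k)$, determined by Mirafzal, which is $S_n \times \ZZ_2$ (the $S_n$ permuting $[n]$, the $\ZZ_2$ swapping the two copies of $K(n,k)$). The "swapping" involutions are of the form $(i,A) \mapsto (1-i, \pi A)$ for a permutation $\pi \in S_n$. For this to be a free involution we need $\pi^2 = \mathrm{id}$ and $\pi A \neq A$ for all $k$-subsets $A$ — i.e., $\pi$ is a fixed-point-free involution on $[n]$, but that forces $n$ even and only gives involutions with no $k$-set fixed... so the parametrization must be subtler. Let me reconsider: I would parametrize the $k$ graphs by the number $j \in \{0,1,\dots,k-1\}$ of transpositions in $\pi$, taking $\pi = \pi_j$ to be a product of $j$ disjoint transpositions (so $\pi_0 = \mathrm{id}$ giving $G_0 = K(n,k)$). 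One checks the involution $\tau_j(i,A) = (1-i, \pi_j A)$ is free on $H(n,k)$ precisely because swapping the $K_2$-coordinate already prevents fixed points, and $G_j(n,k) := H(n,k)/\tau_j$ then satisfies $K_2 \times G_j \cong H(n,k)$ by the lifting theory (Lemmas 5 and 6).

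**The hard part will be** showing the $G_j$ are pairwise non-isomorphic, since, as the introduction stresses, such graphs share degree sequences and neighborhood geometries. The clean way to organize this is via the correspondence between graphs covered by $H(n,k)$ and conjugacy classes of free involutions in $\Aut(H(n,k))$ that swap the parts: two quotients $H(n,k)/\tau$ and $H(n,k)/\tau'$ are isomorphic iff $\tau$ and $\tau'$ are conjugate in $\Aut(H(n,k))$. So I would prove that $\tau_0,\dots,\tau_{k-1}$ lie in distinct conjugacy classes. Using $\Aut(H(n,k)) \cong S_n \times \ZZ_2$, conjugating $\tau_j = (\pi_j, \text{swap})$ by $(\rho, \epsilon)$ replaces $\pi_j$ by $\rho \pi_j \rho^{-1}$, so the conjugacy class is governed by the cycle type of $\pi_j$ — and the $\pi_j$, being products of $j$ disjoint transpositions for distinct $j$, have distinct cycle types and are therefore pairwise non-conjugate.

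To assemble this into a rigorous argument I would proceed in the following order. First, establish the general correspondence (free part-swapping involution $\leftrightarrow$ graph with the given Kronecker cover) using the lifting lemmas, and reduce isomorphism of quotients to conjugacy of involutions; this is the structural backbone. Second, pin down $\Aut(H(n,k))$ from Mirafzal's result and identify exactly which of its elements are free part-swapping involutions, writing each as $(\pi, \text{swap})$ with $\pi^2 = \mathrm{id}$; here I must verify that the swap of the $K_2$-factor guarantees freeness regardless of whether $\pi$ fixes $k$-subsets, which is the step most likely to hide a subtlety if $\Aut(H(n,k))$ turns out larger than $S_n \times \ZZ_2$ for small parameters. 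Third, exhibit $\pi_0,\dots,\pi_{k-1}$ (products of $0,1,\dots,k-1$ transpositions — note $n > 2k$ guarantees enough room in $[n]$) and invoke the cycle-type invariant to conclude the quotients are pairwise non-isomorphic, with $G_0 = K(n,k)$ since $\pi_0 = \mathrm{id}$ yields the trivial involution whose quotient is $K(n,k)$ itself.
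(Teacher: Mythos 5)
Your overall architecture matches the paper's: quotient $H(n,k)$ by odd (part-swapping) involutions $\tau\times\pi_j$ with $\pi_j$ a product of $j$ disjoint transpositions, reduce isomorphism of quotients to conjugacy of involutions (the paper's Proposition \ref{prop 7}), use Mirafzal's $\Aut(H(n,k))\cong\ZZ_2\times S_n$ to reduce conjugacy to cycle type in $S_n$, and conclude the quotients are pairwise non-isomorphic. That part is sound.

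However, there is a genuine gap: you never verify that the quotient graphs are \emph{simple}, and you check the wrong condition in its place. You argue that $\tau_j$ is ``free'' (has no fixed vertices) because it swaps the two sides of the bipartition --- true, but automatic for \emph{every} part-swapping involution and irrelevant to the count. The condition that actually matters is that no vertex of $H(n,k)$ is \emph{adjacent to} its image under $\tau_j$; if some $(1,v)$ is adjacent to $(2,\pi_j v)$, the quotient acquires a loop and is not a simple graph. This is precisely where the number $k$ comes from: for $j\ge k$ the vertex $v=\{1,3,\dots,2k-1\}$ satisfies $\pi_j v=\{2,4,\dots,2k\}$, which is disjoint from $v$, so $v\sim\pi_j v$ and the quotient is not simple; for $j<k$ one must show $v\cap\pi_j v\ne\emptyset$ for every $k$-set $v$ (if $v\not\subseteq\{1,\dots,2j\}$ it contains a fixed point of $\pi_j$; if $v\subseteq\{1,\dots,2j\}$ then $|v|=k>j$ forces $v$ to contain a full transposition pair). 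Without this step your construction does not produce simple graphs for the claimed range, and you have no explanation of why the parametrization stops at $j=k-1$ rather than running up to $\lfloor n/2\rfloor$; relatedly, the completeness assertion (that these $k$ graphs exhaust all simple graphs with Kronecker cover $H(n,k)$) also rests on showing the quotient is non-simple exactly when $j\ge k$.
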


Here we recall some previous work concerning Theorem \ref{thm 1}. As was mentioned, Imrich and Pisanski \cite{IP} constructed a graph $G$ such that $G \not\cong K(5,2)$ but $K_2 \times G \cong H(5,2)$. Moreover, the author constucted a graph $KG'_{n,k}$ such that $KG'_{n,k} \not\cong K(n,k)$ but $K_2 \times KG'_{n,k} = H(n,k)$, when $k \ge 2$. In fact, the graph $KG'_{n,k}$ is the graph $G_1(n,k)$ in our sense, and $G_i(n,k)$ is its generalization. We also noted that the case $k=1$ of Theorem \ref{thm 1} is easily deduced by known methods (see Remark \ref{rem k=1}).

Next we study the automorphism groups of $G_i(n,k)$. Let $\ZZ_2$ denote the cyclic group of order $2$, and $S_n$ the symmetric group of $[n] = \{ 1, \cdots, n\}$. The automorphism group of $G_i(n,k)$ is described as follows. Here we recall that the automorphism group of $K(n,k)$ is isomorphic to $S_n$.

\begin{thm} \label{thm 2}
For $i = 0,1, \cdots, k-1$, there is a group isomorphism
$$\Aut(G_i(n,k)) \cong (\ZZ_2^i \rtimes_{\varphi} S_i) \times S_{n-2i}.$$
Here the action $\varphi \colon S_i \to \Aut(\ZZ_2^i)$ is defined by $\varphi(x_1, \cdots, x_i) = (x_{\sigma^{-1}(1)}, \cdots, x_{\sigma^{-1}(i)})$, and $\ZZ_2^i \rtimes_{\varphi} S_i$ is the semi-direct product of groups with respect to $\varphi$.
\end{thm}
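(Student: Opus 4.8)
The plan is to realize each $G_i(n,k)$ as a quotient of $H = H(n,k)$ by a free involution and to translate the computation of $\Aut(G_i(n,k))$ into a centralizer computation inside $\Aut(H)$, for which Mirafzal's description is available. First I would fix the right description of $\Aut(H)$. Writing the Kronecker cover as $H = K_2 \times K(n,k)$ with vertices $(\epsilon, A)$, the group $\Aut(H)$ is generated by the coordinate action of $S_n$ on the sets $A$ together with the sheet-swap $\tau_0 \colon (\epsilon, A) \mapsto (1-\epsilon, A)$, and by \cite{Mirafzal} one has $\Aut(H) \cong S_n \times \langle \tau_0 \rangle$ with $\tau_0$ central. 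I would then record that $G_i(n,k) = H/\langle \tau_i \rangle$, where $\tau_i = \sigma_i \tau_0$ and $\sigma_i = (1\,2)(3\,4)\cdots(2i-1\,2i) \in S_n$ is a product of $i$ disjoint transpositions; here $\tau_i$ is an involution (since $\tau_0$ is central and $\sigma_i^2 = \mathrm{id}$) that swaps the two bipartition classes, hence is the deck transformation of the covering $p_i \colon H \to G_i(n,k)$.

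The key structural step is the isomorphism $\Aut(G_i(n,k)) \cong C_{\Aut(H)}(\tau_i)/\langle \tau_i\rangle$, where $C$ denotes the centralizer. I would prove this using the lifting lemmas: by Lemma \ref{lem 5} every $f \in \Aut(G_i(n,k))$ lifts to some $\tilde f \in \Aut(H)$, and comparing the two possible lifts $\tilde f$ and $\tau_i \tilde f$ shows $\tilde f \tau_i = \tau_i \tilde f$, so $\tilde f \in C_{\Aut(H)}(\tau_i)$. Conversely any $g \in C_{\Aut(H)}(\tau_i)$ preserves the $\tau_i$-orbits and thus descends along $p_i$ to an automorphism of $G_i(n,k)$. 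This gives a surjection $C_{\Aut(H)}(\tau_i) \to \Aut(G_i(n,k))$ whose kernel is exactly $\langle \tau_i\rangle$, since by the uniqueness part (Lemma \ref{lem 6}) the only lifts of the identity are $\mathrm{id}_H$ and $\tau_i$.

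It then remains to identify this quotient. Since $\tau_0$ is central, $C_{\Aut(H)}(\tau_i) = C_{S_n}(\sigma_i) \times \langle\tau_0\rangle$, and as $\sigma_i$ has cycle type $2^i 1^{n-2i}$ the standard description of centralizers in symmetric groups gives $C_{S_n}(\sigma_i) \cong (\ZZ_2^i \rtimes_\varphi S_i)\times S_{n-2i}$, with $\ZZ_2^i$ swapping the two points of each transposition, $S_i$ permuting the $i$ transpositions by the coordinate action $\varphi$, and $S_{n-2i}$ permuting the fixed points $\{2i+1, \ldots, n\}$. Under this identification $\sigma_i$ is the all-ones element $\mathbf{1} \in \ZZ_2^i$, which is central in $C_{S_n}(\sigma_i)$. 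The homomorphism $\psi \colon C_{S_n}(\sigma_i)\times\langle\tau_0\rangle \to C_{S_n}(\sigma_i)$, $\psi(g, \tau_0^\delta) = g\,\sigma_i^{\delta}$, is then a well-defined surjection (using centrality of $\sigma_i$) with kernel $\langle(\sigma_i, \tau_0)\rangle = \langle\tau_i\rangle$, yielding $\Aut(G_i(n,k)) \cong C_{S_n}(\sigma_i) \cong (\ZZ_2^i \rtimes_\varphi S_i)\times S_{n-2i}$. As a check, $i=0$ recovers $\Aut(K(n,k)) \cong S_n$.

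The main obstacle I anticipate is the structural step $\Aut(G_i(n,k)) \cong C_{\Aut(H)}(\tau_i)/\langle\tau_i\rangle$: one must handle the lifting correspondence carefully, in particular use connectedness of $H$ (valid since $K(n,k)$ is connected and non-bipartite for $n > 2k$) to guarantee that lifts exist, are exactly two in number, and that the only lifts of the identity are $\mathrm{id}_H$ and $\tau_i$. Everything after that is the routine centralizer-and-quotient computation sketched above.
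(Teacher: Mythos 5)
Your proposal is correct and follows essentially the same route as the paper: both use Mirafzal's theorem together with the lifting/descent lemmas to reduce $\Aut(G_i(n,k))$ to the centralizer $Z_{S_n}(\sigma_i)$, and then invoke the wreath-product structure of centralizers of involutions in $S_n$. The only cosmetic difference is that the paper's Proposition \ref{prop 8} identifies $\Aut(X/\alpha)$ directly with the subgroup of \emph{even} automorphisms commuting with $\alpha$ (so no quotient by the deck group and no appeal to connectedness is needed), whereas you take the full centralizer $C_{\Aut(H)}(\tau_i)$ and mod out by $\langle \tau_i \rangle$ --- an equivalent formulation for connected $H$ (and note that lifting is the paper's Lemma \ref{lem 6}, descent is Lemma \ref{lem 5}, not the other way around).
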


Finally, we study the chromatic numbers of $G_i(n,k)$. Here we recall that $\chi(K(n,k)) = n - 2k + 2$ was known as the Kneser conjecture and was proved by Lov\'asz \cite{Lovasz}. In general $K_2 \times G \cong K_2 \times H$ does not imply $\chi(G) = \chi(H)$. For example, by an easy observation, we have
$$K_2 \times (K_2 \times K_n) \cong (K_2 \times K_n) \sqcup (K_2 \times K_n) \cong K_2 \times (K_n \sqcup K_n),$$
but $\chi(K_2 \times K_n) = 2$ and $\chi(K_n \sqcup K_n) = n$. Here the notation $K_n \sqcup K_n$ means that the union of two copies of $K_n$. Moreover, for given integers $m$ and $n$ greater than $2$, there are connected graphs $G$ and $H$ such that $\chi(G) = m$, $\chi(H) = n$, and $K_2 \times G \cong K_2 \times H$ (see \cite{Matsushita}). However, in the case of $G_i(n,k)$, we have that the chromatic numbers of these graphs coincide:

\begin{thm} \label{thm 3}
The chromatic number of $G_i(n,k)$ for $i = 0,1,\cdots, k-1$ coincides with $\chi(K(n,k)) = n-2k+2$.
\end{thm}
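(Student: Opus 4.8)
The plan is to sandwich $\chi(G_i(n,k))$ between the value $n-2k+2$ from both sides, using topology for the lower bound and an explicit coloring for the upper bound. For the lower bound I would invoke Lov\'asz's theorem in the form $\chi(G) \ge \conn(\N(G)) + 3$. The key observation is that $\N(G)$ depends only on the neighborhood geometry of $G$, which, as recalled in the introduction, is an invariant of the Kronecker cover. Since $K_2 \times G_i(n,k) \cong H(n,k) \cong K_2 \times K(n,k)$, we get an isomorphism $\N(G_i(n,k)) \cong \N(K(n,k))$, and Lov\'asz's computation that $\N(K(n,k))$ is $(n-2k-1)$-connected then yields $\chi(G_i(n,k)) \ge (n-2k-1)+3 = n-2k+2$. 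This half is essentially formal once the identification of neighborhood complexes is in place.

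For the upper bound I would produce a proper coloring with exactly $n-2k+2$ colors. Here I would use the concrete description of $G_i(n,k)$: writing $\pi$ for the involution of $[n]$ that transposes $2j-1$ with $2j$ for $1 \le j \le i$ and fixes $2i+1, \dots, n$, the vertices are the $k$-subsets of $[n]$, and $A$ is adjacent to $B$ precisely when $A \cap \pi(B) = \emptyset$. The crucial reduction is the following: if $c$ is a proper coloring of $K(n,k)$ that is invariant under $\pi$ (that is, $c(\pi A) = c(A)$), then $c$, read on the same vertex set, is already a proper coloring of $G_i(n,k)$. Indeed, if $A \cap \pi(B) = \emptyset$ then $A$ and $\pi(B)$ are disjoint, hence adjacent in $K(n,k)$, so $c(A) \ne c(\pi(B)) = c(B)$. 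Thus it suffices to exhibit a $\pi$-invariant proper $(n-2k+2)$-coloring of the ordinary Kneser graph.

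Constructing such an invariant coloring is the part that requires an idea, and I expect it to be the main obstacle. The standard least-element coloring of $K(n,k)$ is not $\pi$-invariant, because $\pi$ moves the small paired labels $1, \dots, 2i$; and one cannot simply split a color class according to which element of a pair a set contains, since $\pi$ would swap the two pieces. To repair this I would push all the paired labels into the tail of the coloring, so that only $\pi$-fixed labels do the discriminating. Concretely, set $R = \{1, \dots, 2i\} \cup \{\,n-2k+2i+2, \dots, n\,\}$; one checks $|R| = 2k-1$, so any two $k$-subsets of $R$ intersect, and $R$ is $\pi$-invariant since $\pi$ preserves $\{1,\dots,2i\}$ and fixes the remaining labels. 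I would then color a $k$-set $A$ by the least element of $A$ lying in $\{2i+1, \dots, n-2k+2i+1\}$ if such an element exists, and by one extra color (the ``$R$-class'') otherwise. All the discriminating labels $2i+1, \dots, n-2k+2i+1$ are fixed by $\pi$, so the coloring is $\pi$-invariant; each non-tail class consists of sets sharing a fixed element, and the tail class consists of $k$-subsets of $R$, so every class is intersecting and the coloring is proper. Counting the $n-2k+1$ non-tail colors together with the tail color gives exactly $n-2k+2$ colors.

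Finally I would assemble the two bounds to conclude $\chi(G_i(n,k)) = n-2k+2$. The only delicate points to verify are the exact connectivity of $\N(K(n,k))$ quoted from Lov\'asz, so that the lower bound lands on $n-2k+2$ rather than something weaker, and the arithmetic that $R$ has size exactly $2k-1$ and remains $\pi$-invariant for every admissible $i$ with $0 \le i \le k-1$ (the extremal case $i=k-1$ leaving a single high free label). Both are routine once the construction is set up, so the conceptual weight of the argument rests on the two observations above: that the neighborhood complex is a Kronecker-cover invariant, and that a $\pi$-invariant optimal coloring of $K(n,k)$ can be arranged by quarantining the paired coordinates inside the intersecting tail.
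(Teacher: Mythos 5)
Your proposal is correct. The lower bound half is exactly the paper's argument: both invoke Lov\'asz's bound $\chi(G) \ge m+3$ for $m$-connected $N(G)$, the fact that the Kronecker cover determines the neighborhood complex, and the $(n-2k-1)$-connectivity of $N(K(n,k))$, so there is nothing to compare there. The upper bound is where you diverge. The paper proceeds by induction on $n$: it observes that $G_i(2k,k)$ is a disjoint union of edges, that $G_i(n-1,k)$ sits inside $G_i(n,k)$ as an induced subgraph, and that the vertices involving a $k$-set containing $n$ form an independent set (because $\sigma_i$ fixes $n$), so one new color per step suffices. You instead give a closed-form coloring: you reduce to finding a $\sigma_i$-invariant proper $(n-2k+2)$-coloring of $K(n,k)$ (correctly, since $A \sim_{G_i} B$ iff $A \cap \sigma_i(B) = \emptyset$ forces $c(A) \ne c(\sigma_i(B)) = c(B)$ for invariant $c$), and you build one by taking least elements only from the $\sigma_i$-fixed labels $\{2i+1,\dots,n-2k+2i+1\}$ and dumping everything else into the intersecting residual family of $k$-subsets of a $(2k-1)$-element $\sigma_i$-invariant set $R$. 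The arithmetic checks out ($|R|=2k-1$, the discriminating set has $n-2k+1$ elements, and all of its elements are fixed by $\sigma_i$ precisely because $i \le k-1$ keeps them above $2i$). The two arguments are close in spirit --- unwinding the paper's induction also yields a ``distinguished element'' coloring --- but yours has the merit of making the role of $\sigma_i$-invariance explicit and producing the coloring in one stroke, while the paper's induction is shorter to write and leans on the already-established structure of $G_i(n,k)$ as built from $H(n,k)$.
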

 In his outstanding proof of the Kneser conjecture, Lov\'asz introduced the neighborhood complex $N(G)$ of a graph $G$, and the connectivity of $N(G)$ gives a lower bound for the chromatic number $\chi(G)$ of $G$. Since the Kronecker cover of $G$ determines the isomorphism type of $N(G)$ (see \cite{Matsushita}) and Lov\'asz \cite{Lovasz} determined the connectivity of $N(K(n,k))$, we have the same lower bound for $\chi(G_i(n,k))$. This is the key observation of our proof of Theorem \ref{thm 3}.

The rest in this paper is organized as follows. In Section 2, we recall some terminology and facts concerning Kronecker coverings, and prove Theorem \ref{thm 1} and Theorem \ref{thm 2}. In Section 3, we review some facts of  neighbohood complexes and prove Theorem \ref{thm 3}.

\subsection*{Acknowledgement}
The author thanks the anonymous referees for their useful comments which significantly improved the presentation of this paper. The author is supported by JSPS KAKENHI 19K14536.

\section{Proofs of Theorem 1 and Theorem 2}

We first fix our notation and terminology, and review some facts concerning Kronecker covers. A graph is a pair $G = (V(G), E(G))$ consisting of a set $V(G)$ together with a symmetric binary relation $E(G)$ of $V(G)$. We write $v \sim_G w$ or simply $v \sim w$ to mean that $v$ and $w$ are adjacent in $G$. A map $f \colon V(G) \to V(H)$ is a {\it graph homomorphism} if $v \sim_G w$ implies $f(v) \sim_H f(w)$. An {\it $n$-coloring} is a graph homomorphism from $G$ to $K_n$. An isomorphism is a graph homomorphism having an inverse which is a graph homomorphism. An {\it automorphism of $G$} is an isomorphism from $G$ to $G$. Let $\Aut(G)$ denote the automorphism group of $G$. An {\it involution of $G$} is an automorphism $\alpha$ of $G$ such that $\alpha^2 = {\rm id}_G$.

A {\it bigraph} \cite{BGZ} is a graph $X$ equipped with a $2$-coloring $\varepsilon \colon G \to K_2$. For a pair $X$ and $Y$ of bigraphs, a graph homomorphism $f \colon V(X) \to V(Y)$ is {\it even} if $\varepsilon f = \varepsilon$, and {\it odd} if $\varepsilon f(x) \ne \varepsilon (x)$ for every $x \in V(X)$.

Let $G$ and $H$ be graphs. The {\it categorical product $G \times H$} is the graph whose vertex set is $V(G) \times V(H)$, and $(v,w) \sim_{G \times H} (v', w')$ if and only if $v \sim_G v'$ and $w \sim_H w'$. The {\it Kronecker cover} is the categorical product $K_2 \times G$. Note that the Kronecker cover $K_2 \times G$ has a $2$-coloring $K_2 \times G \to K_2$, $(i,v) \mapsto i$, and has an odd involution $(1, v) \leftrightarrow (2,v)$. In fact, every bigraph $X$ equipped with an odd involution $\alpha$ is isomorphic to the Kronecker cover over a certain graph $X / \alpha$ defined as follows.

Let $X$ be a bigraph with an odd involution $\alpha$. Define the quotient graph $X/\alpha$ by
$$V(X/\alpha) = \big\{ \{ x, \alpha (x)\} \; | \; x \in V(X) \big\},$$
$$E(X/\alpha) = \{ (\sigma, \tau) \in V(X/\alpha) \times V(X/\alpha) \; | \; (\sigma \times \tau) \cap E(X) \ne \emptyset \}.$$
In other words, $\sigma \sim_{X/\alpha} \tau$ if and only if there is $x \in \sigma$ and $y \in \tau$ such that $x \sim_X y$. Note that $X/\alpha$ is not simple in general. In fact, $X/\alpha$ is simple if and only if there is no vertex $x$ in $X$ such that $x \sim_X \alpha(x)$. The graph homomorphism
$$(\varepsilon, \pi) \colon X \to K_2 \times (X/\alpha), x\mapsto(\varepsilon(x), \pi(x))$$
is an even isomorphism (see \cite{Matsushita}).

\begin{rem} \label{rem k=1}
Let $n$ be an integer greater than $2$. For each $v \in K_2 \times K_n$, there is only one vertex $w$ such that the distance between $v$ and $w$ is odd and greater than $2$. This means that there is only one odd involution $\alpha$ on $K_2 \times K_n$ such that $(K_2 \times K_n)/ \alpha$ is simple. Of course, it is $K_n$.
\end{rem}

The following two lemmas are known (see \cite{IP} and Theorem 3.1 of \cite{Matsushita}) and easily proved.

\begin{lem} \label{lem 5}
Let $X$ and $Y$ be bigraphs, $\alpha$ and $\beta$ odd involutions of $X$ and $Y$ respectively, and $f \colon X \to Y$ a (not necessarily even) graph homomorphism satisfying $f \alpha = \beta f$. Then there is a unique graph homomorphism $\overline{f} \colon X/\alpha \to Y/\beta$ satisfying $\pi f = \overline{f} \pi$. If $f$ is an isomorphism, then $\overline{f}$ is an isomorphism.
\end{lem}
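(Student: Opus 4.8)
The plan is to define $\overline{f}$ by the only possible formula and then check the required properties. Since the quotient map $\pi$ is surjective on vertices, the demanded relation $\pi f = \overline{f}\pi$ determines $\overline{f}$ completely: on the vertex $\pi(x) = \{x, \alpha(x)\}$ we are forced to set $\overline{f}(\{x,\alpha(x)\}) = \pi f(x) = \{f(x), \beta f(x)\}$. This observation simultaneously yields the uniqueness assertion and tells us exactly what must be verified for existence, so the two halves of the first claim are handled at once.

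First I would check that this formula is well defined, that is, independent of the chosen representative of the class $\{x, \alpha(x)\}$. The only other representative is $\alpha(x)$, which produces the candidate image $\{f\alpha(x), \beta f\alpha(x)\}$. Here the hypothesis $f\alpha = \beta f$ enters: we have $f\alpha(x) = \beta f(x)$, and $\beta f\alpha(x) = \beta\beta f(x) = f(x)$ since $\beta$ is an involution, so the two candidate images coincide. This is the single place where the commuting condition is genuinely used, and I regard it as the crux of the argument, even though it reduces to a one-line computation.

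Next I would verify that $\overline{f}$ is a graph homomorphism. Given $\sigma \sim_{X/\alpha} \tau$, by definition there are $x \in \sigma$ and $y \in \tau$ with $x \sim_X y$; then $f(x) \sim_Y f(y)$ because $f$ is a homomorphism. Since $f(x) \in \overline{f}(\sigma)$ and $f(y) \in \overline{f}(\tau)$, the definition of adjacency in $Y/\beta$ immediately gives $\overline{f}(\sigma) \sim_{Y/\beta} \overline{f}(\tau)$. The identity $\pi f = \overline{f}\pi$ then holds by construction, and uniqueness follows since any homomorphism $g$ with $\pi f = g\pi$ must satisfy $g(\pi(x)) = \pi f(x) = \overline{f}(\pi(x))$ on every vertex, $\pi$ being surjective.

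Finally, for the isomorphism claim, I would observe that if $f$ is an isomorphism then conjugating $f\alpha = \beta f$ yields $f^{-1}\beta = \alpha f^{-1}$, so $f^{-1}$ satisfies the hypotheses with the roles of $(X,\alpha)$ and $(Y,\beta)$ exchanged, inducing a homomorphism $\overline{f^{-1}} \colon Y/\beta \to X/\alpha$. The composite $\overline{f^{-1}}\,\overline{f}$ satisfies $\overline{f^{-1}}\,\overline{f}\,\pi = \pi f^{-1}f = \pi$, and so does ${\rm id}_{X/\alpha}$; by the uniqueness just proved (applied to ${\rm id}_X$) the two agree, and symmetrically $\overline{f}\,\overline{f^{-1}} = {\rm id}_{Y/\beta}$. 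Hence $\overline{f}$ is an isomorphism with inverse $\overline{f^{-1}}$. I do not anticipate any serious obstacle beyond the well-definedness check, which is exactly where the compatibility $f\alpha = \beta f$ is indispensable.
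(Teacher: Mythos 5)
Your proof is correct and complete: the well-definedness check via $f\alpha = \beta f$, the adjacency verification, uniqueness from surjectivity of $\pi$, and the passage to $f^{-1}$ for the isomorphism claim are all sound. The paper itself omits the proof of this lemma, stating only that it is known and easily proved, and your argument is exactly the standard one being alluded to.
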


\begin{lem} \label{lem 6}
Let $X$ and $Y$ be bigraphs, and $\alpha$ and $\beta$ odd involutions of $X$ and $Y$, respectively. For every graph homomorphism $f \colon X/\alpha \to Y/\beta$, there is a unique even graph homomorphism $\tilde{f} \colon X \to Y$ such that $\pi \tilde{f} = f \pi$. If $f$ is an isomorphism, then $\tilde{f}$ is also an isomorphism.
\end{lem}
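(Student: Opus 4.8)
The plan is to reduce everything to the explicit Kronecker-cover model, where the lift is given by a one-line formula. Recall from the discussion preceding the statement that $(\varepsilon, \pi) \colon X \to K_2 \times (X/\alpha)$ is an even isomorphism, and likewise $(\varepsilon, \pi) \colon Y \to K_2 \times (Y/\beta)$. Under these identifications $\varepsilon$ becomes the first projection, $\pi$ the second projection, and the odd involution $\alpha$ becomes the swap $(i,v) \mapsto (\bar\imath, v)$: indeed $\pi\alpha = \pi$ because $\{x,\alpha(x)\} = \{\alpha(x), \alpha^2(x)\}$, while oddness of $\alpha$ forces $\varepsilon(\alpha(x)) \ne \varepsilon(x)$, hence flips the $K_2$-coordinate (there being only two colors); similarly for $\beta$. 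So I may assume $X = K_2 \times G$ and $Y = K_2 \times H$ with $G = X/\alpha$ and $H = Y/\beta$.

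For existence, set $\tilde{f}(i,v) = (i, f(v))$. It is even since $\varepsilon \tilde{f}(i,v) = i = \varepsilon(i,v)$, and it satisfies $\pi \tilde{f}(i,v) = f(v) = f\pi(i,v)$. To see it is a graph homomorphism, suppose $(i,v) \sim (j,w)$ in $K_2 \times G$; then $i \ne j$ and $v \sim_G w$, so $f(v) \sim_H f(w)$ while still $i \ne j$, whence $(i, f(v)) \sim (j, f(w))$ in $K_2 \times H$. This is the only point at which the graph structure enters, and in the product model it is immediate.

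For uniqueness, suppose $g \colon X \to Y$ is even and satisfies $\pi g = f\pi$. Then for every $(i,v)$ we have $\varepsilon g(i,v) = i$ and $\pi g(i,v) = f(v)$; since $(\varepsilon,\pi)$ is injective (it is the identity under our identification), these two equations force $g(i,v) = (i, f(v)) = \tilde{f}(i,v)$, so $g = \tilde{f}$. Finally, if $f$ is an isomorphism, I apply the construction to $f^{-1}$ to get $\widetilde{f^{-1}}(i,w) = (i, f^{-1}(w))$, and a direct composition gives $\widetilde{f^{-1}} \circ \tilde{f} = \mathrm{id}_X$ and $\tilde{f} \circ \widetilde{f^{-1}} = \mathrm{id}_Y$; equivalently this follows formally from uniqueness, since lifting is functorial and sends identities to identities.

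I would flag that, once the product reduction is made, there is essentially no obstacle: the whole content of the lemma is packaged in the even isomorphism $(\varepsilon,\pi)$ recalled before the statement. The only place requiring genuine care is if one insists on working directly with the abstract quotient sets $\{x,\alpha(x)\}$. There one defines $\tilde{f}(x)$ to be the unique element of $f\pi(x)$ whose $\varepsilon$-value equals $\varepsilon(x)$ (well defined, since the two elements of $f\pi(x)$ carry the two distinct colors), but proving that $\tilde{f}$ preserves edges then needs the covering-map property: one uses that $\pi \colon Y \to Y/\beta$ restricts to a bijection on neighborhoods, together with the fact that adjacent vertices of a bigraph receive different colors, to identify which of the two lifts of $f\pi(x')$ is adjacent to $\tilde{f}(x)$. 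Passing to the product model sidesteps this bookkeeping entirely, which is why I would organize the proof around it.
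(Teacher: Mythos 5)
Your proof is correct. Note that the paper itself gives no proof of this lemma --- it is stated as ``known and easily proved,'' with references to \cite{IP} and \cite{Matsushita} --- so there is no in-paper argument to compare against; your reduction to the product model via the even isomorphism $(\varepsilon,\pi)$ recalled just before the statement is exactly the intended mechanism, and the formula $\tilde f(i,v)=(i,f(v))$ together with the injectivity of $(\varepsilon,\pi)$ settles existence, uniqueness, and the isomorphism claim cleanly. One minor quibble with your closing remark: in the direct approach on the abstract quotients one does not really need the neighborhood-bijection property of $\pi$; once one observes that an adjacent pair $y\in f\pi(x)$, $y'\in f\pi(x')$ must carry opposite colors, it is either $(\tilde f(x),\tilde f(x'))$ or its image under $\beta$, and applying the automorphism $\beta$ handles the second case --- but this concerns only your aside, not the proof you actually give.
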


Here we mention two important applications of these lemmas. Two odd involutions $\alpha$ and $\beta$ of $X$ are {\it conjugate} if there is an automorphism $f$ such that $f \alpha = \beta f$. Similarly, $\alpha$ and $\beta$ are {\it evenly conjugate} if there is an even automorphism $f$ such that $f \alpha = \beta f$. Using this terminology, we have the following classification result. Note that in the following corollary, the implication $(3) \Rightarrow (1)$ is known (see Proposition 3 of \cite{IP} for example).

\begin{prop} \label{prop 7}
Let $\alpha$ and $\beta$ be odd involutions in a bigraph $X$. Then the following are equivalent.
\begin{itemize}
\item[(1)] $X/\alpha$ and $X/\beta$ are isomorphic.

\item[(2)] $\alpha$ and $\beta$ are evenly conjugate.

\item[(3)] $\alpha$ and $\beta$ are conjugate.
\end{itemize}
\end{prop}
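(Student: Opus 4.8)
The plan is to establish the cycle of implications $(2) \Rightarrow (3) \Rightarrow (1) \Rightarrow (2)$, of which only the last carries real content. Throughout I write $\pi_\alpha \colon X \to X/\alpha$ and $\pi_\beta \colon X \to X/\beta$ for the two quotient projections, and I use $\varepsilon \colon X \to K_2$ for the $2$-coloring making $X$ a bigraph. The implication $(2) \Rightarrow (3)$ is immediate, since an even automorphism is in particular an automorphism. For $(3) \Rightarrow (1)$, given an automorphism $f$ of $X$ with $f\alpha = \beta f$, Lemma \ref{lem 5} applied to $f$ (viewed as a homomorphism from $X$ with involution $\alpha$ to $X$ with involution $\beta$) directly produces an isomorphism $\overline{f} \colon X/\alpha \to X/\beta$ with $\pi_\beta f = \overline{f}\,\pi_\alpha$; this is the implication already recorded in Proposition 3 of \cite{IP}.

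The heart of the matter is $(1) \Rightarrow (2)$. Assume $g \colon X/\alpha \to X/\beta$ is an isomorphism. By Lemma \ref{lem 6} there is a unique even isomorphism $f \colon X \to X$ with $\pi_\beta f = g\pi_\alpha$, and this $f$ is my candidate for the even conjugating automorphism. It therefore remains only to verify the relation $f\alpha = \beta f$. The key observation is that the composite $\beta f \alpha$ is again an even lift of $g$, so that the uniqueness clause of Lemma \ref{lem 6} forces $\beta f \alpha = f$.

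To carry this out I check two parity-type facts. First, $\beta f\alpha$ is even: since $\alpha$ and $\beta$ are odd and $f$ is even, the composite reverses the $2$-coloring twice and hence preserves it, so $\varepsilon(\beta f\alpha(x)) = \varepsilon(x)$ for all $x$. Second, $\beta f\alpha$ descends to the same map $g$: using $\pi_\alpha\alpha = \pi_\alpha$ (because $x$ and $\alpha(x)$ name the same vertex of $X/\alpha$) and $\pi_\beta\beta = \pi_\beta$, I compute
$$\pi_\beta(\beta f\alpha) = \pi_\beta f \alpha = g\,\pi_\alpha\alpha = g\,\pi_\alpha = \pi_\beta f.$$
Thus $\beta f\alpha$ and $f$ are both even graph homomorphisms $X \to X$ whose composite with $\pi_\beta$ equals $g\pi_\alpha$, and the uniqueness in Lemma \ref{lem 6} yields $\beta f\alpha = f$. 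Composing on the left with $\beta$ and using $\beta^2 = \mathrm{id}$ gives $f\alpha = \beta f$, so $f$ exhibits $\alpha$ and $\beta$ as evenly conjugate, completing $(1) \Rightarrow (2)$.

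I expect no serious obstacle beyond the bookkeeping of parities: the only genuinely delicate point is the verification that $\beta f\alpha$ is even and projects to the same map as $f$. Once those two facts are in hand, the uniqueness statement of Lemma \ref{lem 6} does all the work, and the equivalence of the three conditions follows formally.
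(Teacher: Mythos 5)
Your proof is correct and follows essentially the same route as the paper: the cycle $(1) \Rightarrow (2) \Rightarrow (3) \Rightarrow (1)$, with Lemma \ref{lem 6} supplying the even lift for $(1) \Rightarrow (2)$ and Lemma \ref{lem 5} giving $(3) \Rightarrow (1)$. The only difference is that you explicitly verify $f\alpha = \beta f$ via the uniqueness clause of Lemma \ref{lem 6} (showing $\beta f\alpha$ is another even lift of $g$), a detail the paper's one-line proof leaves implicit.
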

\begin{proof}
Suppose that there is an isomorphism $f \colon X/\alpha \to X/\beta$. It follows from Lemma \ref{lem 6} that there is an even isomorphism $\tilde{f} \colon X \to X$ satisfying $\tilde{f} \alpha = \beta \tilde{f}$. It is clear that (2) implies (3). It follows from Lemma \ref{lem 5} that (3) implies (1).
\end{proof}

\begin{prop} \label{prop 8}
Let $\alpha$ be an odd involution of a bigraph $X$. Then $\Aut(X/\alpha)$ is isomorphic to the subgroup of $\Aut(X)$ consisting of even elements commuting with $\alpha$.
\end{prop}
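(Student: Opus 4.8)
The plan is to exhibit an explicit group isomorphism between $\Aut(X/\alpha)$ and the subgroup
$$H = \{ g \in \Aut(X) \; | \; g \text{ is even and } g\alpha = \alpha g \}$$
of $\Aut(X)$, using Lemmas \ref{lem 5} and \ref{lem 6}. First I would record that $H$ really is a subgroup: the even automorphisms form a subgroup since the condition $\varepsilon g = \varepsilon$ is closed under composition and inversion, and the automorphisms commuting with $\alpha$ form the centralizer of $\alpha$, so $H$ is an intersection of two subgroups. To define the map, take $g \in H$. Since $g\alpha = \alpha g$, Lemma \ref{lem 5} applied with $Y = X$ and $\beta = \alpha$ yields a unique $\overline{g} \colon X/\alpha \to X/\alpha$ with $\pi g = \overline{g}\pi$, and $\overline{g}$ is an automorphism because $g$ is an isomorphism; set $\Psi(g) = \overline{g}$. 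That $\Psi$ is a homomorphism follows from the uniqueness clause of Lemma \ref{lem 5}: both $\overline{g_1 g_2}$ and $\overline{g_1}\,\overline{g_2}$ satisfy the defining relation against $\pi(g_1 g_2)$, so they agree.

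For injectivity, suppose $\overline{g} = {\rm id}$. Then $\pi g = \pi$, so for every vertex $x$ we have $\{ g(x), \alpha g(x)\} = \{x, \alpha(x)\}$, whence $g(x) \in \{ x, \alpha(x)\}$. The possibility $g(x) = \alpha(x)$ is excluded because $g$ is even while $\alpha$ is odd, so it would contradict $\varepsilon g = \varepsilon$. Hence $g(x) = x$ for all $x$, i.e. $g = {\rm id}$, and $\Psi$ is injective.

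Surjectivity is where the main difficulty lies. Given $f \in \Aut(X/\alpha)$, Lemma \ref{lem 6} produces a unique even isomorphism $\tilde{f} \colon X \to X$ with $\pi\tilde{f} = f\pi$, and the crux is to show that $\tilde{f}$ commutes with $\alpha$, so that $\tilde{f} \in H$. Here I would use the identity $\pi\alpha = \pi$, which holds because $\pi(x) = \{x,\alpha(x)\} = \pi(\alpha(x))$. The conjugate $\alpha\tilde{f}\alpha$ is again even, since each application of the odd involution $\alpha$ flips the $2$-coloring and the two flips cancel; moreover $\pi(\alpha\tilde{f}\alpha) = \pi\tilde{f}\alpha = f\pi\alpha = f\pi$. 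Thus $\alpha\tilde{f}\alpha$ is an even isomorphism satisfying the very relation that characterizes $\tilde{f}$, so the uniqueness part of Lemma \ref{lem 6} forces $\alpha\tilde{f}\alpha = \tilde{f}$, which is equivalent to $\tilde{f}\alpha = \alpha\tilde{f}$ because $\alpha^2 = {\rm id}$. Hence $\tilde{f} \in H$, and comparing $\overline{\tilde{f}}$ with $f$ (both satisfy $\pi\tilde{f} = (\,\cdot\,)\pi$) via the uniqueness in Lemma \ref{lem 5} gives $\Psi(\tilde{f}) = f$. Therefore $\Psi$ is a bijective homomorphism, establishing the claimed isomorphism. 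The one genuinely nontrivial point is the commutation step $\alpha\tilde{f}\alpha = \tilde{f}$, and I expect that to be the heart of the argument.
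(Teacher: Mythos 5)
Your proposal is correct and follows essentially the same route as the paper: descend via Lemma \ref{lem 5}, lift via Lemma \ref{lem 6}, and use the uniqueness clauses to see that the two operations are mutually inverse group homomorphisms. You additionally spell out the one point the paper leaves implicit, namely that the lift $\tilde{f}$ commutes with $\alpha$ (via $\pi\alpha = \pi$ and the uniqueness in Lemma \ref{lem 6}), which is exactly the right justification.
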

\begin{proof}
Let $\Gamma$ be the subgroup of $\Aut(X)$ consisting of even elements commuting with $\alpha$. Define the group homomorphisms $\Phi \colon \Gamma \to \Aut(X/\alpha)$ and $\Psi \colon \Aut(X/\alpha) \to \Gamma$ as follows. Let $f \in \Gamma$. Since $f \alpha = \alpha f$, Lemma \ref{lem 5} implies that $f$ induces an isomorphism $\Phi(f) = \overline {f} \colon X/\alpha \to X/\alpha$. On the other hand, let $g \in \Aut(X/\alpha)$. It follows from Lemma \ref{lem 6} that there is a unique even automorphism $\tilde{g} \colon X \to X$ satisfying $\tilde{g} \alpha = \alpha \tilde{g}$, and put $\Psi(g) = \tilde{g} \in \Gamma$. These correspondences are group homomorphisms and $\Psi$ is the inverse of $\Phi$.
\end{proof}

Now we study the automorphism group of $K_2 \times G$. For a pair of graphs $G$ and $H$, we have a monomorphism $\Aut(G) \times \Aut(H) \to \Aut(G \times H)$ which sends $(f,g)$ to $f \times g$. Here $f \times g$ is the automorphism sending $(v,w)$ to $(f(v), g(w))$. Since $\Aut(K_2) = \ZZ_2$, there is a monomorphism
\begin{align}
\ZZ_2 \times \Aut(G) \to \Aut(K_2 \times G). \tag{$*$}
\end{align}
In general, this monomorphism is not an isomorphism (see Remark \ref{rem 1} for example). However, when $G = K(n,k)$, this monomorphism is an isomorphism:

\begin{thm}[Mirafzal \cite{Mirafzal}]
If $n > 2k$, then, the group homomorphism
$$\ZZ_2 \times \Aut(K(n,k)) \to \Aut(H(n,k))$$
described in $(*)$ is an isomorphism. In particular, $\Aut(H(n,k)) \cong \ZZ_2 \times S_n$.
\end{thm}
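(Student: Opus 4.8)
The plan is to work in the equivalent subset model of $H(n,k)$ (Lemma 20 of \cite{GP}), in which the vertex set is $A \sqcup B$ with $A = \binom{[n]}{k}$ and $B = \binom{[n]}{n-k}$, and two vertices are adjacent if and only if one contains the other. Under this identification the image of the monomorphism $(*)$ is exactly $\langle c \rangle \cdot S_n$, where $S_n$ acts on subsets in the obvious way and $c(\rho) = [n] \setminus \rho$ is complementation: the $\ZZ_2$-factor, i.e.\ the Kronecker swap $(i,v) \mapsto (3-i,v)$, corresponds to $c$, and since $c$ commutes with the $S_n$-action, has order $2$, and interchanges $A$ with $B$ (so $c \notin S_n$, using $k \ne n-k$ as $n > 2k$), this image is $\langle c\rangle \times S_n \cong \ZZ_2 \times S_n$ of order $2\cdot n!$. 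As $(*)$ is already known to be a monomorphism, it suffices to prove that every automorphism of $H(n,k)$ lies in $\langle c \rangle \times S_n$.

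First I would reduce to side-preserving automorphisms. Since $n > 2k$, the Kneser graph $K(n,k)$ is connected and non-bipartite (its chromatic number $n-2k+2$ exceeds $2$), so its Kronecker cover $H(n,k)$ is connected and bipartite with bipartition $\{A,B\}$. A connected bipartite graph has a unique bipartition, so any automorphism $\phi$ either fixes $A$ and $B$ setwise or interchanges them; composing with $c$ when necessary, I may assume $\phi(A)=A$ and $\phi(B)=B$.

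The heart of the argument is to recognize the $S_n$-symmetry already on the single colour class $A$. For distinct $\sigma_1,\sigma_2 \in A$ with $u = |\sigma_1 \cup \sigma_2|$, their common neighbors are precisely the $(n-k)$-supersets of $\sigma_1 \cup \sigma_2$, whose number is $\binom{n-u}{k}$ when $u \le n-k$ and $0$ otherwise. This count is a strictly decreasing function of $u$ on the range $k < u \le n-k$, and the smallest admissible value $u = k+1$ (which satisfies $k+1 \le n-k$ exactly because $n > 2k$) yields the strictly largest, and positive, value $\binom{n-k-1}{k}$. Hence the pairs of distinct vertices of $A$ having the maximum number of common neighbors are exactly those with $|\sigma_1 \cap \sigma_2| = k-1$, i.e.\ the edges of the Johnson graph $J(n,k)$. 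Since $\phi$ preserves common-neighbor counts within $A$, the restriction $\phi|_A$ is an automorphism of $J(n,k)$; and it is classical that $\Aut(J(n,k)) \cong S_n$ for $n \ne 2k$, the same group $S_n$ that the paper records for $K(n,k)$. Thus $\phi|_A$ is induced by some $g \in S_n$, and after replacing $\phi$ by $g^{-1}\phi$ I may assume that $\phi$ fixes $A$ pointwise.

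It then remains to see that fixing $A$ pointwise forces $\phi = \mathrm{id}$. Each $\tau \in B$ is adjacent exactly to its $k$-subsets, and distinct $(n-k)$-sets have distinct families of $k$-subsets (if $x \in \tau \setminus \tau'$ then some $k$-subset of $\tau$ contains $x$, using $n-k \ge k$), so $\tau$ is determined by its neighborhood $N(\tau) \subseteq A$. Since $\phi$ fixes $A$ pointwise, $N(\phi(\tau)) = \phi(N(\tau)) = N(\tau)$, whence $\phi(\tau)=\tau$, and so $\phi = \mathrm{id}$. Tracing back the reductions shows that every automorphism lies in $\langle c \rangle \times S_n$, the image of $(*)$, so $(*)$ is an isomorphism and $\Aut(H(n,k)) \cong \ZZ_2 \times S_n$. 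I expect the one genuinely non-formal step to be the third paragraph, namely extracting an honest $S_n$-action from a single colour class of a bipartite graph; the monotonicity of $u \mapsto \binom{n-u}{k}$ handles this uniformly for all $n > 2k$, with the degenerate behaviour $\Aut(J(2k,k)) \supsetneq S_n$ being precisely what the hypothesis $n > 2k$ excludes.
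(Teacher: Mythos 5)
This theorem is not proved in the paper at all: it is quoted from Mirafzal's article \cite{Mirafzal} and used as a black box, so there is no internal proof to compare against. Your argument is a correct, self-contained substitute. The reductions are sound: the image of $(*)$ in the subset model is indeed $\langle c\rangle\times S_n$; connectedness and bipartiteness of $H(n,k)$ (from $K(n,k)$ being connected and non-bipartite for $n>2k$) give the unique bipartition, so composing with $c$ reduces to side-preserving automorphisms; the common-neighbor count $\binom{n-u}{k}$ for $u=|\sigma_1\cup\sigma_2|$ is computed correctly and is maximized, with positive value $\binom{n-k-1}{k}>0$, exactly at $u=k+1$ because $n>2k$, so $\phi|_A$ preserves the Johnson adjacency; and the final step that a vertex of $B$ is determined by its neighborhood uses $n-k\ge k$ correctly. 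The one external input you rely on is that every automorphism of $J(n,k)$ for $n\ne 2k$ is \emph{induced by a permutation of the ground set} (not merely that the abstract group is $S_n$); this is the classical form of the result (e.g.\ Brouwer--Cohen--Neumaier), but you should cite it in that form, since the weaker abstract-isomorphism statement would not let you replace $\phi$ by $g^{-1}\phi$. With that citation made precise, your proof stands on its own and in fact yields $\Aut(K(n,k))\cong S_n$ as a byproduct rather than assuming it, which is slightly more than the paper's treatment (a bare citation) provides.
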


When the monomorphism $(*)$ is an isomorphism, then the classification of the graphs whose Kronecker covers are $K_2 \times G$ is simpler. Here we write $\tau$ to indicate the non-trivial involution of $K_2$.

\begin{prop}
Let $G$ be a graph and suppose that the monomorphism $(*)$ is an isomorphism. Then the following hold:
\begin{itemize}
\item[(1)] For every odd involution $\alpha$ of $K_2 \times G$, there is an involution $\alpha'$ of $G$ with $\alpha = \tau \times \alpha'$.
\item[(2)] Let $\alpha'$ and $\beta'$ be involutions of $G$. Then $\tau \times \alpha'$ and $\tau \times \beta'$ are evenly conjugate if and only if $\alpha'$ and $\beta'$ are conjugate, i.e. there is $f \in \Aut(G)$ with $f \alpha' = \beta' f$.
\end{itemize}
\end{prop}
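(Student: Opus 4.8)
The plan is to exploit the hypothesis that $(*)$ is an isomorphism, which says that every automorphism of $K_2 \times G$ can be written uniquely as $s \times g$ with $s \in \Aut(K_2) = \{ {\rm id}, \tau \}$ and $g \in \Aut(G)$. Once every automorphism is in this product form, both claims reduce to reading off the $K_2$-component using the defining conditions ``odd'' and ``even'', and then comparing the two factors of a product of automorphisms coordinatewise. The key structural fact I would use repeatedly is that such products multiply and conjugate factorwise: $(s \times g)(s' \times g') = (s s') \times (g g')$.

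For part (1), I would start with an odd involution $\alpha$ and write it as $\alpha = s \times g$ via $(*)$. Since $\varepsilon(\alpha(i,v)) = s(i)$, the oddness condition $\varepsilon \alpha(x) \ne \varepsilon(x)$ forces $s(i) \ne i$ for every $i$, hence $s = \tau$. Next, $\alpha^2 = \tau^2 \times g^2 = {\rm id} \times g^2$, so the involution condition $\alpha^2 = {\rm id}$ gives $g^2 = {\rm id}$; setting $\alpha' = g$ yields an involution of $G$ with $\alpha = \tau \times \alpha'$, as required.

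For part (2), the ``if'' direction is a direct check: given $h \in \Aut(G)$ with $h \alpha' = \beta' h$, the automorphism $f = {\rm id} \times h$ is even, since its $K_2$-component is the identity, and it satisfies $f(\tau \times \alpha') = \tau \times (h\alpha') = \tau \times (\beta' h) = (\tau \times \beta') f$, so $\tau \times \alpha'$ and $\tau \times \beta'$ are evenly conjugate. For the ``only if'' direction I would write the conjugating even automorphism $f$ as $s \times h$ through $(*)$; because $f$ is even, the computation $\varepsilon(f(i,v)) = s(i)$ now forces $s = {\rm id}$, so $f = {\rm id} \times h$. The intertwining relation $f(\tau \times \alpha') = (\tau \times \beta') f$ then becomes $\tau \times (h\alpha') = \tau \times (\beta' h)$, and comparing the $G$-components gives $h \alpha' = \beta' h$, exhibiting $\alpha'$ and $\beta'$ as conjugate in $\Aut(G)$.

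I do not expect a genuine obstacle here: the whole argument is a matter of decomposing automorphisms via $(*)$ and matching factors. The only point requiring care is translating the combinatorial notions of odd and even maps into the algebraic constraints $s = \tau$ and $s = {\rm id}$ on the $K_2$-component, and then observing that the product structure makes both the involution condition in (1) and the conjugation relation in (2) split cleanly into the two coordinates.
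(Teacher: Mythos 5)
Your proof is correct and follows essentially the same route as the paper: decompose automorphisms of $K_2 \times G$ as $s \times g$ via the isomorphism $(*)$, read off $s = \tau$ from oddness (resp.\ $s = {\rm id}$ from evenness), and match the $G$-components. The paper's own proof is just a terser version of exactly this argument.
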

\begin{proof}
Since the monomorphism $(*)$ is an isomorphism, every involution $\alpha$ of $K_2 \times G$ is written by ${\rm id}_{K_2} \times \alpha'$ or $\tau \times \alpha'$ for some $\alpha' \in \Aut(G)$. Since $\alpha$ is an involution, we have that $\alpha'$ is an involution. The involution ${\rm id}_{K_2} \times \alpha'$ is even and $\tau \times \alpha'$ is odd. This proves (1). (2) follows from the fact that every even automorphism of $K_2 \times G$ is written as ${\rm id}_{K_2} \times f$ for some $f \in \Aut(G)$ under our assumption.
\end{proof}

We are now ready to prove Theorem \ref{thm 1}.

\vspace{2mm}
\noindent
{\it Proof of Theorem \ref{thm 1}.} For $i = 0, 1, \cdots, [n/2]$, define $\sigma_i \in S_n$ to be the composite of transpositions
$$(1,2)(3,4) \cdots (2i-1,2i).$$
By the classification of conjugacy classes of $S_n$, every element in $S_n$ of order $2$ is conjugate to some $\sigma_i$, and $i \ne j$ implies that $\sigma_i$ and $\sigma_j$ are not conjugate. Then $\tau \times \sigma_i$ is an odd involution of $K_2 \times G$, and put $G_i = G_i(n,k) = H(n,k) / (\tau \times \sigma_i)$. Then $G_0 = K(n,k)$, $i \ne j$ implies $G_i \ne G_j$, and for every odd involution $\alpha$ of $H(n,k)$, there is $i$ with $G_i \cong H(n,k) / \alpha$. To complete the proof, we prove that $G_i$ is simple if and only if $i < k$.

Suppose $i \ge k$. Then put $v = \{ 1,3, \cdots, 2k-1\} \in V(K(n,k))$. Then $v \sim \sigma_i (v)$ in $K(n,k)$ implies that $(1,v) \sim (2, \sigma_i(v)) = (\tau \times \sigma_i)(1,v)$ in $H(n,k)$. Thus $G_i$ is not simple. On the other hand, suppose $i < k$. Then for each $v \in V(K(n,k))$, we have that $\sigma_i(v) \cap v \ne \emptyset$ and hence $v \not\sim \sigma_i(v)$ for every $v \in V(K(n,k))$. This means that $(i,v) \not\sim (\tau(i), \sigma_i(v))$ for every $(i, v) \in V(H(n,k))$. This means that $G_i$ is simple, and completes the proof. \qed

\vspace{2mm}
Before giving the proof of Theorem \ref{thm 2}, we introduce the following notation: Let $G$ be a group and $x$ an element in $G$. We write $Z_G(x)$ to indicate the subgroup of $G$ consisting of the elements in $G$, which commute with $x$.

\vspace{2mm} \noindent
{\it Proof of Theorem \ref{thm 2}.}
Proposition \ref{prop 8} implies that the automorphism group of $G_i(n,k) = X / (\tau \times \sigma_i)$ is isomorphic to $Z_{S_n}(\sigma_i)$. Hence the following proposition completes the proof. \qed

\begin{prop}
For $m = 0,1,\cdots, [n/2]$, there is a following isomorphism:
$$Z_{S_n}(\sigma_m) = (\ZZ_2^m \rtimes_{\varphi} S_m) \times S_{n-2m}$$
Here the action $\varphi \colon S_m \to \Aut(\ZZ_2^m)$ is defined by
$$\varphi(\sigma)(x_1, \cdots, x_m) = (x_{\sigma^{-1}(1)}, \cdots, x_{\sigma^{-1}(m)}).$$
\end{prop}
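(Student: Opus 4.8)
The plan is to carry out the standard computation of the centralizer of a permutation, specialized to the cycle type of $\sigma_m$. Observe that $\sigma_m$ consists of $m$ disjoint $2$-cycles together with $n-2m$ fixed points. Set $P_j = \{2j-1, 2j\}$ for $j = 1, \ldots, m$ (the supports of the transpositions) and $F = \{2m+1, \ldots, n\}$ (the fixed-point set). First I would record the elementary fact that any $g \in Z_{S_n}(\sigma_m)$ must preserve both the support $\{1, \ldots, 2m\}$ and the fixed-point set $F$ setwise: if $\sigma_m(x) = x$ then $\sigma_m(g(x)) = g(\sigma_m(x)) = g(x)$, so $g(F) = F$, and dually $g$ maps the support to itself. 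Moreover, since $g \sigma_m g^{-1} = \sigma_m$ and $g \sigma_m g^{-1}$ is the product of the transpositions $(g(2j-1), g(2j))$, the unordered collection of blocks $\{P_1, \ldots, P_m\}$ must be permuted among themselves by $g$; thus $g$ induces a permutation $\bar{g} \in S_m$ via $g(P_j) = P_{\bar{g}(j)}$.

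Next I would construct a group homomorphism $\Pi \colon Z_{S_n}(\sigma_m) \to S_m \times S_{n-2m}$, where the first coordinate is $\bar{g}$ and the second records the restriction of $g$ to $F$ (identifying $S_{n-2m}$ with the symmetric group on $F$). The kernel of $\Pi$ consists precisely of those $g$ that fix every block setwise and fix every point of $F$; within each block such a $g$ either swaps the two elements or fixes them, so $\ker \Pi \cong \ZZ_2^m$, one $\ZZ_2$ per block. To split $\Pi$ I would exhibit an explicit section: send $\sigma \in S_m$ to the permutation $s_\sigma$ acting by $2j-1 \mapsto 2\sigma(j)-1$, $2j \mapsto 2\sigma(j)$ and fixing $F$ pointwise, and embed $S_{n-2m}$ by permuting $F$ and fixing the support pointwise. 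Both maps are homomorphisms landing in the centralizer, and their composite with $\Pi$ is the identity, so $\Pi$ is split surjective.

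Finally I would assemble the semidirect and direct product structure. The image of $S_{n-2m}$ commutes with everything supported on $\{1, \ldots, 2m\}$, hence is a genuine direct factor. It remains to identify the subgroup generated by $\ker \Pi \cong \ZZ_2^m$ and the section of $S_m$ with $\ZZ_2^m \rtimes_\varphi S_m$: writing $e_j$ for the generator of $\ker \Pi$ that swaps $P_j$, one computes $s_\sigma e_j s_\sigma^{-1} = e_{\sigma(j)}$, so conjugation by $s_\sigma$ permutes the coordinate generators by $\sigma$, which is exactly the action $\varphi$. The main obstacle is bookkeeping rather than conceptual: one must check that $s_\sigma$ is indeed a homomorphism into the centralizer and that its conjugation action on $\ker \Pi$ reproduces $\varphi$ with the correct indexing, in particular that sending $e_j \mapsto e_{\sigma(j)}$ agrees with the $\sigma^{-1}$ appearing in $\varphi(\sigma)(x_1, \ldots, x_m) = (x_{\sigma^{-1}(1)}, \ldots, x_{\sigma^{-1}(m)})$. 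Once these verifications are in place, combining the split exact sequence with the direct factor $S_{n-2m}$ yields the claimed isomorphism $Z_{S_n}(\sigma_m) \cong (\ZZ_2^m \rtimes_\varphi S_m) \times S_{n-2m}$.
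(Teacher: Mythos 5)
Your proposal is correct and follows essentially the same route as the paper: split off $S_{n-2m}$ acting on the fixed points, then identify the centralizer on the support of $\sigma_m$ with the wreath product $\ZZ_2^m \rtimes_\varphi S_m$ via block permutations and within-block swaps. The only difference is presentational: you package this as a split short exact sequence $1 \to \ZZ_2^m \to Z \to S_m \to 1$ with an explicit section, whereas the paper writes down the isomorphism $\Phi \colon \ZZ_2^m \rtimes_\varphi S_m \to Z_{S_{2m}}(\sigma_m)$ directly and checks surjectivity by decomposing an arbitrary centralizing element; the key commutation relation $s_\sigma e_j s_\sigma^{-1} = e_{\sigma(j)}$ you verify is the same as the paper's $\varepsilon_i \tilde{\sigma} = \tilde{\sigma}\varepsilon_{\sigma^{-1}(i)}$.
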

\begin{proof}
Let $\sigma$ be an element of $Z_{S_n}(\sigma_m)$. Then $\sigma$ does not send an element of $\{ 1, \cdots, 2m\}$ to $\{ 2m+1, \cdots, n\}$. This means $Z_{S_n}(\sigma_m) \cong Z_{S_{2m}}(\sigma_m) \times S_{n-2m}$. In $S_{2m}$, $\sigma_m$ is conjugate with the element
$$\tau = (1,m+1) \cdots (m,2m).$$
Thus it suffices to show $Z_{S_{2m}}(\tau) = \ZZ_2^m \rtimes_\varphi S_m$.

First we define the group homomorphism $\Phi \colon \ZZ_2^m \rtimes_\varphi S_m \to Z_{S_{2m}}(\tau)$. For $i = 1, \cdots, m$, set $\varepsilon_i = (i, n+i) \in S_{2m}$. For $\sigma \in S_m$, define $\tilde{\sigma} \in S_{2m}$ by
$$\tilde{\sigma}(i) = \begin{cases}
\sigma(i) & (i = 1, \cdots, m)\\
\sigma(i-m) + m & (i = m+1, \cdots, 2m).
\end{cases}$$
Let $\Phi \colon \ZZ_2^m \rtimes_\varphi S_m \to Z_{S_{2m}}(\tau)$ be the map which sends $((x_1, \cdots, x_m), \sigma)$ to $\varepsilon_1^{x_1} \cdots \varepsilon_m^{x_m} \tilde{\sigma}$. Using the relation $\varepsilon_i \tilde{\sigma} = \tilde{\sigma} \varepsilon_{\sigma^{-1}(i)}$, we have that $\Phi$ is a group homomorphism.

Since $\Phi$ is injective, it suffices to show that $\Phi$ is surjective. Let $\sigma \in Z_{2m}(\tau)$. We identify $\{ 1, \cdots, 2n\}$ with $\ZZ_{2n}$, and for $i = 1, \cdots, 2n$, define $k_i \in \ZZ_{2n}$ by $\sigma(i) = i + k_i$. Since $\sigma$ and $\tau$ commute, we have $k_i = k_{n+i}$. This means that $\sigma$ gives rise to a permutation of the family of sets
$$s_1 = \{ 1, n+1\}, s_2 = \{ 2, n+2\}, \cdots, s_n = \{ n, 2n\}.$$
Define $\sigma' \in S_m$ by $\sigma(s_i) = s_{{\sigma'}(i)}$. For $i = 1, \cdots, m$, define $x_i \in \ZZ_2$ as follows:
\begin{itemize}
\item If $\sigma(i) = \sigma'(i)$, then $x_{\sigma(i)} = 0$.
\item If $\sigma(i) = \sigma'(i) + n$, then $x_{\sigma(i)} = 1$.
\end{itemize}
Then we have $\Phi((x_1, \cdots, x_m), \sigma') = \sigma$. This completes the proof.
\end{proof}

\begin{rem}\label{rem 1}
If $i > 0$, the group homomorphism
$$\ZZ_2 \times (\ZZ_2^i \rtimes_\varphi S_i) \times S_{n-2i} \cong \ZZ_2 \times \Aut(G_i(n,k)) \to \Aut(H(n,k)) \cong \ZZ_2 \times S_n$$
described by $(*)$ is not an isomorphism.
\end{rem}

\section{Proof of Theorem \ref{thm 3}}

The purpose in this section is to prove Theorem \ref{thm 3}. Namely, we want to show that $\chi(G_i(n,k))) = n - 2k + 2$ if $n > 2k$. We note that the proof given here is a straightforward generalization of the proof of $\chi(G_1(n,k)) = n - 2k + 2$ in \cite{Matsushita}.

Recall that Lov\'asz \cite{Lovasz} introduced neighborhood complexes of graphs to determine $\chi(K(n,k))$. We first review the definition and facts concerning neighborhood complexes. Let $G$ be a graph. Then the neighborhood complex $N(G)$ is the simplicial complex whose simplex is a subset of $V(G)$ having a common neighbor. Lov\'asz showed the following two theorems in his proof of Kneser's conjecture:

\begin{thm} \label{thm 3.1}
If $N(G)$ is $m$-connected, then $\chi(G) \ge m + 3$.
\end{thm}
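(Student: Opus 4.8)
The plan is to prove the contrapositive: assuming $\chi(G) = n$, I will show that $N(G)$ cannot be $(n-2)$-connected, which is exactly the inequality $\conn(N(G)) \le n-3$, i.e. $\chi(G) \ge \conn(N(G)) + 3$. The whole argument rests on converting a proper coloring into an equivariant map of $\ZZ_2$-spaces and then invoking the Borsuk--Ulam theorem. The first ingredient is functoriality: a graph homomorphism $f \colon G \to H$ carries a set with a common neighbor to a set with a common neighbor, so it induces a simplicial map $N(f) \colon N(G) \to N(H)$; in particular an $n$-coloring $c \colon G \to K_n$ induces $N(c) \colon N(G) \to N(K_n)$. A direct computation identifies the target: a subset $S \subseteq [n]$ has a common neighbor in $K_n$ precisely when $S \ne [n]$, so $N(K_n)$ is the boundary of the $(n-1)$-simplex, hence $N(K_n) \cong S^{n-2}$. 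This already explains the appearance of the sphere $S^{n-2}$.

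The obstacle is that $N(G)$ carries no obvious free $\ZZ_2$-action, so one cannot feed $N(c)$ directly into Borsuk--Ulam. I would resolve this by passing to the complex $\mathrm{Hom}(K_2, G)$ (equivalently, a box complex of $G$), which is a free $\ZZ_2$-space whose involution swaps the two sides of a bipartite homomorphism. This construction is again functorial, so $c$ induces a $\ZZ_2$-map $\mathrm{Hom}(K_2, G) \to \mathrm{Hom}(K_2, K_n)$, where $\mathrm{Hom}(K_2, K_n)$ is $\ZZ_2$-homeomorphic to $S^{n-2}$ with the antipodal action. The crucial compatibility I would then cite is the homotopy equivalence $\mathrm{Hom}(K_2, G) \simeq N(G)$, so that the hypothesis ``$N(G)$ is $m$-connected'' transfers to ``$\mathrm{Hom}(K_2, G)$ is $m$-connected.''

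With these pieces in place, the conclusion is a standard index argument. Since $\mathrm{Hom}(K_2, G)$ is a free $\ZZ_2$-space that is $m$-connected, equivariant obstruction theory produces a $\ZZ_2$-map $S^{m+1} \to \mathrm{Hom}(K_2, G)$, so its $\ZZ_2$-coindex is at least $m+1$. On the other hand, the coloring furnishes a $\ZZ_2$-map $\mathrm{Hom}(K_2, G) \to S^{n-2}$, so its $\ZZ_2$-index is at most $n-2$. The Borsuk--Ulam theorem, in the form $\mathrm{coind}_{\ZZ_2} \le \mathrm{ind}_{\ZZ_2}$ for free $\ZZ_2$-spaces, then gives $m+1 \le n-2$, that is $\chi(G) = n \ge m+3$, as desired.

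The main obstacle, and the step I expect to require the most care, is the interface between the neighborhood complex and a free $\ZZ_2$-space: establishing (or precisely citing) the homotopy equivalence $\mathrm{Hom}(K_2, G) \simeq N(G)$ together with the $\ZZ_2$-homeomorphism $\mathrm{Hom}(K_2, K_n) \cong S^{n-2}$. These are exactly what convert the purely homotopy-theoretic hypothesis on $N(G)$ into an equivariant statement to which Borsuk--Ulam applies; everything downstream is bookkeeping with connectivity and the $\ZZ_2$-index and coindex.
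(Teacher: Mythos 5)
Your outline is correct, but note that the paper does not prove this statement at all: it is quoted verbatim as one of Lov\'asz's two theorems from \cite{Lovasz} and used as a black box, so there is no in-paper argument to compare against. What you have written is a sound sketch of the standard modern proof, and the logical chain is right: functoriality of $N(-)$, the identification $N(K_n)\cong \partial\Delta^{n-1}\cong S^{n-2}$, the passage to the free $\ZZ_2$-complex $\mathrm{Hom}(K_2,G)$ (the box complex), the homotopy equivalence $\mathrm{Hom}(K_2,G)\simeq N(G)$, the skeleton-by-skeleton construction of a $\ZZ_2$-map $S^{m+1}\to \mathrm{Hom}(K_2,G)$ from $m$-connectedness, and Borsuk--Ulam in the form that no $\ZZ_2$-map $S^{a}\to S^{b}$ exists for $a>b$, yielding $m+1\le n-2$. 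The two facts you flag as delicate (the $\ZZ_2$-homotopy-theoretic comparison of $\mathrm{Hom}(K_2,G)$ with $N(G)$, and $\mathrm{Hom}(K_2,K_n)\cong_{\ZZ_2}S^{n-2}$) are indeed the real content and are standard citable results (Babson--Kozlov; see also Kozlov's book \cite{Kozlov} and Matou\v{s}ek's treatment), so as a proof-by-citation your argument closes. It differs in packaging from Lov\'asz's original route, which works directly with (the barycentric subdivision of) $N(G)$ and the common-neighborhood involution $A\mapsto \bigcap_{v\in A}N(v)$ to manufacture the free $\ZZ_2$-structure, rather than importing it from $\mathrm{Hom}(K_2,-)$; your version buys cleaner functoriality and a transparent equivariant index argument at the cost of invoking the box-complex machinery. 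One small stylistic point: you announce a proof of the contrapositive but in fact run the direct implication ($N(G)$ $m$-connected and $\chi(G)=n$ force $m+1\le n-2$); the conclusion is the same, but the framing should match.
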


\begin{thm} \label{thm 3.2}
The neighborhood complex $N(K(n,k))$ of $K(n,k)$ is $(n-2k-1)$-connected.
\end{thm}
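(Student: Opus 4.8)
The plan is to pin down the homotopy type of $N(K(n,k))$ precisely enough to read off its connectivity, and the first step is to describe its faces concretely. A collection $\{A_1,\dots,A_m\}$ of $k$-subsets of $[n]$ spans a simplex of $N(K(n,k))$ exactly when the $A_i$ have a common neighbor in $K(n,k)$, i.e. when there is a $k$-subset $B$ disjoint from every $A_i$; such a $B$ exists if and only if $|A_1\cup\cdots\cup A_m|\le n-k$. So a nonempty set of vertices is a face precisely when the union of its members has at most $n-k$ elements. In particular the maximal faces are the full simplices on the $k$-subsets of a fixed $(n-k)$-subset $U\subseteq[n]$, so $N(K(n,k))$ is a thick union of large simplices in which the expected sphere is hidden; the whole point of the argument is to expose it.

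Next I would collapse this thick complex onto a small poset by a Quillen fibre argument. Let $\mathcal{S}$ be the poset of subsets $S\subseteq[n]$ with $k\le|S|\le n-k$, ordered by inclusion, and let $\mathrm{supp}$ send a nonempty face $\sigma=\{A_1,\dots,A_m\}$ to its support $A_1\cup\cdots\cup A_m$. By the face description, $\mathrm{supp}$ is a well-defined order-preserving map from the poset of nonempty faces of $N(K(n,k))$ to $\mathcal{S}$. For each $S\in\mathcal{S}$ the fibre $\mathrm{supp}^{-1}(\mathcal{S}_{\le S})$ consists of all nonempty families of $k$-subsets of $S$, which is the face poset of the full simplex on the $k$-subsets of $S$ and is therefore contractible. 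Quillen's fibre lemma then yields a homotopy equivalence $N(K(n,k))\simeq\|\mathcal{S}\|$, the order complex of $\mathcal{S}$.

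It remains to prove that $\|\mathcal{S}\|$ is $(n-2k-1)$-connected, and this is where the real work sits. The poset $\mathcal{S}$ is the rank-selection of the Boolean lattice $B_n$ to the consecutive ranks $k,k+1,\dots,n-k$; a maximal chain passes through all of these ranks and has $n-2k+1$ terms, so $\dim\|\mathcal{S}\|=n-2k$. Since $B_n$ is shellable, every rank-selected subposet is again shellable, so $\|\mathcal{S}\|$ is homotopy equivalent to a wedge of spheres of its top dimension $n-2k$ and is in particular $(n-2k-1)$-connected. The main obstacle is precisely this last input: one must either invoke the theorem that rank selection preserves shellability (Cohen--Macaulayness) of $B_n$, or give a self-contained argument---for instance an explicit shelling of $\mathcal{S}$, or an induction on $n$ realizing $\|\mathcal{S}_{n,k}\|$ as a suspension of $\|\mathcal{S}_{n-1,k}\|$ (dimensionally consistent, since suspension carries $S^{\,n-1-2k}$ to $S^{\,n-2k}$)---to pin down the connectivity without appealing to the general machinery. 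As a consistency check, for $k=1$ this recovers $N(K_n)=\partial\Delta^{n-1}\simeq S^{n-2}$, which is indeed $(n-3)$-connected.
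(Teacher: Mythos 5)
The paper does not prove this statement at all: it is quoted verbatim from Lov\'asz's 1978 paper as one of the two black-box inputs to the proof of Theorem 3, so any actual proof you give is automatically ``a different route.'' Your argument is correct and is the now-standard modern proof. The face description ($\{A_1,\dots,A_m\}$ is a face iff $|A_1\cup\dots\cup A_m|\le n-k$) is right, the support map has contractible fibres $\mathrm{supp}^{-1}(\mathcal{S}_{\le S})$ (the face poset of a full simplex on $\binom{S}{k}$), so Quillen's fibre lemma identifies $N(K(n,k))$ with the order complex of the ranks $k,\dots,n-k$ of the Boolean lattice, a pure complex of dimension $n-2k$; Bj\"orner's theorem that rank selection preserves shellability then gives a wedge of $(n-2k)$-spheres, hence $(n-2k-1)$-connectedness. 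Compared with Lov\'asz's original argument (a direct homotopy/deformation analysis of $N(K(n,k))$), your route buys the full homotopy type rather than just the connectivity, at the price of importing two substantial pieces of machinery (Quillen's fibre lemma and the rank-selection theorem), so it is no more self-contained than the paper's citation --- but it is a complete and correct reduction. One caveat: the fallback you sketch in the last step is wrong as stated. $\|\mathcal{S}_{n,k}\|$ is \emph{not} in general the suspension of $\|\mathcal{S}_{n-1,k}\|$: for $n=5$, $k=2$ the complex $\|\mathcal{S}_{5,2}\|$ is the $(2,3)$-incidence graph of $[5]$, a connected graph with $20$ vertices and $30$ edges, hence a wedge of $11$ circles, whereas the suspension of $\|\mathcal{S}_{4,2}\|$ (six isolated points) is a wedge of only $5$ circles. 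So if you want to avoid the rank-selection theorem you need a genuine shelling or a more careful inductive decomposition (e.g.\ a union over the two subposets determined by whether $n\in S$, analyzed via the nerve or a homotopy pushout), not a suspension.
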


On the other hand, the author noted in \cite{Matsushita} that the Kronecker cover of a graph $G$ determines the neighborhood complex of $G$:

\begin{lem}[Theorem 1.2 of \cite{Matsushita}. See also \cite{BGZ}] \label{lem 3.3}
Let $G$ and $H$ be graphs. If $K_2 \times G \cong K_2 \times H$, then their neighborhood complexes $N(G)$ and $N(H)$ are isomorphic.
\end{lem}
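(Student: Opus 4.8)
The plan is to compute the neighborhood complex of a Kronecker cover directly, and then to reduce the statement to a cancellation argument over disjoint unions of simplicial complexes.

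First I would determine $N(K_2 \times G)$ for an arbitrary graph $G$. Write the two color classes of $K_2 \times G$ as $A = \{1\} \times V(G)$ and $B = \{2\} \times V(G)$. Since $K_2 \times G$ is bipartite with these parts, any common neighbor of a set of vertices lies in a single part, and consequently every nonempty simplex of $N(K_2 \times G)$ is contained in $A$ or in $B$; thus $N(K_2 \times G)$ is the disjoint union of the full subcomplex $N_A$ spanned by $A$ and the full subcomplex $N_B$ spanned by $B$. Now a set $\{(1,v_1), \cdots, (1,v_r)\}$ has a common neighbor $(2,w)$ precisely when $w \sim_G v_j$ for all $j$, that is, exactly when $\{v_1, \cdots, v_r\}$ has a common neighbor in $G$. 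Hence $(1,v) \mapsto v$ is a simplicial isomorphism $N_A \cong N(G)$, and likewise $N_B \cong N(G)$. Therefore
$$N(K_2 \times G) \cong N(G) \sqcup N(G).$$

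Next, a graph isomorphism $K_2 \times G \cong K_2 \times H$ induces a simplicial isomorphism of neighborhood complexes, since an isomorphism carries common-neighbor sets bijectively onto common-neighbor sets. Combining this with the computation above gives $N(G) \sqcup N(G) \cong N(H) \sqcup N(H)$. It then remains to cancel the doubling. I would decompose $N(G)$ and $N(H)$ into connected components. Because these complexes are finite, the decomposition into connected components is unique up to reordering, so the multiset of isomorphism types of the components of $N(G) \sqcup N(G)$ is exactly that of $N(G)$ with every multiplicity doubled, and similarly for $H$. The isomorphism forces these two multisets to coincide; dividing every multiplicity by two yields that $N(G)$ and $N(H)$ have the same multiset of components, whence $N(G) \cong N(H)$.

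The main obstacle is this final cancellation step: an isomorphism $K_2 \times G \cong K_2 \times H$ need not respect the bipartition, so it cannot be expected to carry one copy of $N(G)$ onto one copy of $N(H)$ directly, and one genuinely needs the uniqueness of the connected-component decomposition of a finite simplicial complex to undo the doubling. Everything else is a direct unwinding of the definitions of the categorical product and of the neighborhood complex.
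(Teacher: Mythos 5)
Your proof is correct, and the first thing to note is that the paper itself gives no argument for this lemma: it is imported verbatim as Theorem 1.2 of the cited reference \cite{Matsushita} (see also \cite{BGZ}), so there is no in-text proof to compare against. Your route --- first identifying $N(K_2 \times G)$ with $N(G) \sqcup N(G)$ by observing that every simplex of the neighborhood complex of $K_2\times G$ lies in a single colour class and that the full subcomplex on either class is a copy of $N(G)$, then cancelling the doubling via the uniqueness of the decomposition of a simplicial complex into connected components --- is a clean, self-contained substitute. You are also right to flag the one genuinely delicate point: an isomorphism $K_2 \times G \cong K_2 \times H$ need not respect the bipartitions in a globally coherent way (on some components it may swap the two classes), so one cannot simply restrict it to $\{1\} \times V(G)$ and land in $\{1\} \times V(H)$; the multiset-of-components argument handles this correctly, and the cancellation of multiplicities $2a = 2b \Rightarrow a = b$ is valid (trivially for finite graphs, and even at the level of cardinals otherwise). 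The cited sources reach the same conclusion through essentially this mechanism, phrased in terms of neighborhood hypergraphs or ``neighborhood geometries'': graphs with isomorphic Kronecker covers have the same family of vertex neighborhoods, and $N(G)$ is the downward closure of that family. Your version makes the combinatorial content explicit without leaving the language of simplicial complexes, which is a reasonable trade: it is slightly longer but requires no external input.
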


Combining the above results, we have the following corollary:

\begin{cor}
The neighborhood complex $N(G_i(n,k))$ is $(n-2k-1)$-connected. In particular, the inequality $\chi(G_i(n,k)) \ge n - 2k + 2$ holds.
\end{cor}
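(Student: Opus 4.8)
The plan is to chain together the three results stated just above, since the corollary is essentially a packaging of Lemma~\ref{lem 3.3}, Theorem~\ref{thm 3.2}, and Theorem~\ref{thm 3.1}. First I would invoke Theorem~\ref{thm 1}, which gives $K_2 \times G_i(n,k) \cong H(n,k)$, together with the defining fact that $H(n,k) = K_2 \times K(n,k)$. Composing these isomorphisms yields $K_2 \times G_i(n,k) \cong K_2 \times K(n,k)$, so the two graphs have isomorphic Kronecker covers.

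Next I would apply Lemma~\ref{lem 3.3} to this isomorphism of Kronecker covers. The lemma says precisely that having isomorphic Kronecker covers forces isomorphic neighborhood complexes, so we obtain $N(G_i(n,k)) \cong N(K(n,k))$. By Theorem~\ref{thm 3.2} the complex $N(K(n,k))$ is $(n-2k-1)$-connected, and connectivity is invariant under simplicial isomorphism, so $N(G_i(n,k))$ is $(n-2k-1)$-connected as well. This establishes the first assertion of the corollary.

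For the final inequality I would feed this connectivity bound into Theorem~\ref{thm 3.1}. Taking $m = n - 2k - 1$, the hypothesis of that theorem is met, and its conclusion gives $\chi(G_i(n,k)) \ge m + 3 = (n - 2k - 1) + 3 = n - 2k + 2$, as required.

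There is no real obstacle here: the proof is a formal composition of already-established facts, and the only thing to check carefully is the arithmetic $m+3 = n-2k+2$ and the bookkeeping that the isomorphism $K_2 \times G_i(n,k) \cong H(n,k)$ from Theorem~\ref{thm 1} really does match up with $H(n,k) = K_2 \times K(n,k)$. The slightly delicate conceptual point, rather than a difficulty, is simply recognizing that connectivity of a simplicial complex is a topological invariant preserved by the isomorphism supplied by Lemma~\ref{lem 3.3}, so that the lower bound transfers from $K(n,k)$ to every $G_i(n,k)$ despite these graphs being pairwise non-isomorphic.
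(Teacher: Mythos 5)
Your proposal is correct and follows exactly the paper's argument: the paper derives the corollary by "combining the above results," i.e., chaining $K_2 \times G_i(n,k) \cong H(n,k) \cong K_2 \times K(n,k)$ with Lemma~\ref{lem 3.3}, Theorem~\ref{thm 3.2}, and Theorem~\ref{thm 3.1}, just as you do. The arithmetic $m+3 = (n-2k-1)+3 = n-2k+2$ is also right.
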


We now complete the proof of $\chi(G_i(n,k)) = n - 2k + 2$. This is proved by induction on $n$. First, note that $G_i(2k,k)$ is a disjoint union of copies of $K_2$, and hence it is clear that $\chi(G_i(2k,k)) = 2$. Suppose that $n > 2k$ and $\chi(G_i(n-1,k)) = n-2k+1$. A vertex in $G_i(n,k)$ which is not contained in $G_i(n-1,k)$ is written as
$$\{ (1, s), (2, \sigma_i s)\},$$
where $s$ is a $k$-subset of $[n]$ containing $n$. Note that $\sigma_i \in S_n$ fixes $n$. Since $G_i(n-1,k)$ is an induced subgraph of $G_i(n,k)$, we have that
$$n-2k+2 \le \chi(G_i(n,k)) \le \chi(G_i(n-1,k)) + 1 = n-2k+2.$$
This completes the proof.

\end{document}